\tikzstyle{decision} = [diamond, draw, fill=blue!20, 
\tikzstyle{block} = [rectangle, draw, fill=blue!20, 
\tikzstyle{line} = [draw, -latex']
\tikzstyle{cloud} = [draw, ellipse,fill=red!20, node distance=3cm,
\tikzset{main node/.style={circle,fill=blue!20,draw,minimum size=1cm,inner sep=0pt},  }
\begin{document}
\title[Lagrangian formulation]{Constrained dynamical optimal transport and its Lagrangian formulation}
\author[Li, Osher]{Wuchen Li and Stanley Osher}
\email{wcli@math.ucla.edu}
\email{sjo@math.ucla.edu}
\address{Department of Mathematics, University of California, Los Angeles.}
\newcommand{\vr}{\overrightarrow}
\newcommand{\wt}{\widetilde}
\newcommand{\dd}{\mathcal{\dagger}}
\newcommand{\ts}{\mathsf{T}}
\keywords{Dynamical optimal transport; Mean field games; Information geometry; Machine learning.}
\thanks{The research is supported by AFOSR MURI proposal number 18RT0073.}
\maketitle
\begin{abstract}
We propose dynamical optimal transport (OT) problems constrained in a parameterized probability subset. In applied problems such as deep learning, the probability distribution is often generated by a parameterized mapping function. In this case, we derive a formulation for the constrained dynamical OT.
\end{abstract}
\section{Introduction}
Dynamical optimal transport problems play vital roles in fluid dynamics \cite{SE} and mean field games \cite{MFG}. They provide a type of statistical distance and interesting differential structures in the set of probability densities \cite{vil2008}. The full probability set is often intractable, when the dimension of sample space is large. For this reason, parameterized probability subsets have been widely considered, especially in machine learning problems and information geometry \cite{Amari, IG, IG2, tryphon, modin, GP}. We are interested in studying dynamical OT problems over a parameterized probability subset. 

In this note, we follow a series of work found in \cite{NG2, LiG, LiG2, NG1, LNG2}, and introduce general constrained dynamical OT problems in a parametrized probability subset. As in deep learning \cite{WGAN}, the probability subset is often constructed by a parameterized mapping. In these cases, we demonstrate that the constrained dynamical OT problems exhibit simple variational structures.

We arrange this note as follows: In section \ref{section2}, we briefly review the dynamical OT in both Eulerian and Lagrangian coordinates\footnote{In fluid dynamics, the Eulerian coordinates represent the evolution of probability density function of particles, while the Lagrangian coordinates describe the motion of particles. In learning problems, the Eulerian coordinates naturally connect with the minimization problem in term of probability densities, while Lagrangian coordinate refers to the variational problem formulated in samples, whose analog are particles. ``In learning, we model problems in Eulerian, and compute them in Lagrangian''. In other words, we often write the objective function in term of densities and compute them via samples.}. Using Eulerian coordinates, we propose the constrained dynamical OT over a parameterized probability subset. In section \ref{section3}, we next derive an equivalent Lagrangian formulation for the constrained problem. 
\section{Constrained dynamical OT}\label{section2}
In this section, we briefly review the dynamical OT in a full probability set via both Eulerian and Lagrangian formalisms. Using the Eulerian coordinates, we propose the dynamical OT in a parameterized probability subset. 

For the simplicity of exposition, all our derivations assume smoothness. Consider densities $\rho^0,\rho^1\in \mathcal{P}_+(\Omega)=\Big\{\rho(x)\in C^{\infty}(\Omega)\colon \rho(x)>0,~\int_\Omega\rho(x)dx=1\Big\}$, where $\Omega$ is a $n$-dimensional sample space. Here $\Omega$ can be $\mathbb{R}^n$, or a convex compact region in $\mathbb{R}^n$ with zero flux conditions or periodic boundary conditions. 

Dynamical OT studies a variational problem in density space, known as the Benamou-Brenier formula \cite{BB}. Given a Lagrangian function $L\colon T\Omega \rightarrow [0,+\infty)$ under suitable conditions, consider
 \begin{subequations}\label{BBL}
 \begin{equation}\label{BB}
C(\rho^0,\rho^1):=\inf_{v_t}~\int_0^1 \mathbb{E}L(X_t(\omega), v(t,X_t(\omega))) dt, 
 \end{equation}
 where $\mathbb{E}$ is the expectation operator over realizations $\omega$ in event space and the infimum is taken over all vector fields $v_t=v(t,\cdot)$, such that 
 \begin{equation}\label{BB2}
 \dot X_t(\omega)=v(t, X_t(\omega)),\quad X_0\sim\rho^0,\quad X_1\sim\rho^1.
 \end{equation}
 \end{subequations}
Here $X_i\sim \rho^i$ represents that $X_i(\omega)$ satisfies the probability density $\rho^i(x)$, for $i=0,1$. Equivalently, denote the density of particles $X_t(\omega)$ at space $x$ and time $t$ by $\rho(t,x)$. Then problem \eqref{BBL} refers to a variational problem in density space:
 \begin{subequations}\label{BBE}
 \begin{equation}\label{BB}
C(\rho^0,\rho^1):=\inf_{v_t}~\int_0^1\int_\Omega L(x, v(t,x))\rho(t,x) dx dt, 
 \end{equation}
 where the infimum is taken over all Borel vector fields $v_t=v(t,\cdot)$, such that the density function $\rho(t,x)$ satisfies the continuity equation:  
 \begin{equation}\label{BB2}
 \frac{\partial \rho(t,x)}{\partial t}+\nabla\cdot (\rho(t,x)v(t,x))=0,\quad \rho(0,x)=\rho^0(x),\quad \rho(1,x)=\rho^1(x).
 \end{equation}
 \end{subequations}
 Here $\nabla\cdot$ is the divergence operator in $\Omega$. In the language of fluid dynamics, problem \eqref{BBL} refers to the Lagrangian formalism, while problem \eqref{BBE} is the associated Eulerian formalism; see details in \cite{vil2008}. The Lagrangian formalism focuses on the motion of each individual particles, while the Eulerian formalism describes the global behavior of all particles.   
 Here \eqref{BBL} and \eqref{BBE} are equivalent since they represent the same variational problem using different coordinate systems. 
 In addition, one often considers $L(x,v)=\|v\|^p$, $p\geq 1$, where $\|\cdot\|$ is the Euclidean norm. In this case, the optimal value of the variational problem defines a distance function in the set of probability space. Denote $W_p(\rho^0,\rho^1):=C(\rho^0,\rho^1)^{\frac{1}{p}}$, where $W_p$ is called the $L^p$-Wasserstein distance. 
 
We next study the variational problem \eqref{BBE} constrained on a parameterized probability density set. In other words, consider a parameter space $\Theta\subset \mathbb{R}^d$ with  
\begin{equation*}
\mathcal{P}_{\Theta}=\Big\{\rho(\theta, x)\in C^{\infty}(\Omega)\colon \theta\in \Theta,~\int_\Omega \rho(\theta,x)dx=1,~\rho(\theta,x)>0, ~x\in \Omega\Big\}.
\end{equation*}
Here we assume that $\rho\colon \Theta \rightarrow \mathcal{P}_+(\Omega)$ is an injective mapping\footnote{We abuse the notation of $\rho$. Notice that $\rho(\theta, x)$ is a probability distribution parameterized by $\theta\in \Theta$, while $\rho(x)$ is a probability distribution function in the full probability set.}. We introduce the constrained dynamical OT as follows:
\begin{subequations}\label{cons}
 \begin{equation}
c(\theta_0,\theta_1):=\inf_{v_t}~\int_0^1\int_\Omega L(x, v(t,x))\rho(\theta_t,x) dx dt,
\end{equation}
where $\theta_t=\theta(t)\in \Theta$, $t\in[0,1]$, is a path in parameter space, and the infimum is taken over the Borel vector fields $v_t=v(t,\cdot)$, such that the {\em constrained continuity equation} holds:
\begin{equation}\label{3b}
 \frac{\partial}{\partial t}\rho(\theta_t,x)+\nabla\cdot (\rho(\theta_t,x)v(t,x))=0,\quad\textrm{$\theta_0, \theta_1$ are fixed}.
 \end{equation}
 \end{subequations}
We notice that the infimum of problem \eqref{cons} is taken over density paths lying in the parameterized probability set, i.e. $\rho(\theta_t, \cdot)\in \rho(\Theta)$. Here the changing ratio of density is reduced into a finite dimensional direction, i.e. $ \frac{\partial}{\partial t}\rho(\theta_t,x)=(\nabla_{\theta_t}\rho(\theta_t, x), \frac{d\theta_t}{dt})$, where $(\cdot,\cdot)$ is an inner product in $\mathbb{R}^d$.

 A natural question arises. Variational problem \eqref{BBE}, together with its constrained problem \eqref{cons}, are written in Eulerian coordinates. They evolve unavoidably with the entire probability density functions. For practical reasons, 
 can we find the Lagrangian coordinates for constrained problem \eqref{cons}? In other words, what are analogs of \eqref{BBL} in $\rho(\Theta)$? We next demonstrate an answer to this question. We show that there is an expression for the motion of particles, whose density path moves according to the constrained continuity equation \eqref{3b}.

\section{Lagrangian formulations}\label{section3}
In this section, we show the main result of this note, that is the constrained dynamical OT \eqref{cons} has a simple Lagrangian formulation in Proposition \ref{thm}. 

Consider a parameterized mapping or implicit generative model as follows. Given a input space $Z\subset \mathbb{R}^{n_1}$, $n_1\leq n$, let
$$g_\theta\colon Z\rightarrow \Omega,\quad x=g(\theta, z)\in \Omega,\quad\textrm{for $z\in Z$.}$$
Here $g_\theta$ is a mapping function depending on parameters $\theta\in \Theta$. 
Given realizations $\omega$ in event space, we assume that the random variable $z(\omega)$ satisfies a density function $\mu(z)\in \mathcal{P}_+(Z)$, and denote $x(\omega)=g(\theta, z(\omega))$ satisfying the density function $\rho(\theta,x)$. This means that the map $g_\theta$ pushes forward $\mu(z)$ to $\rho(\theta,x)$, denoted by $\rho(\theta, x)=g_\theta\sharp\mu(z)$: \begin{equation}\label{a}
\int_{Z}f(g(\theta, z))\mu(z)dz=\int_{\Omega}f(x)\rho(\theta,x)dx, \quad\textrm{for any $f\in C_c^{\infty}(\Omega)$.}
\end{equation}
In this case, the parameterized probability set is given as follows:
\begin{equation*}
\rho(\Theta)=\Big\{\rho(\theta, x)\in C^{\infty}(\Omega)\colon \theta\in \Theta,~\rho(\theta, x)=g_\theta\sharp\mu(z)\Big\}.
\end{equation*}

We next present the constrained dynamical OT in Lagrangian coordinates. We notice a fact that, the vector field in optimal density path of problem \eqref{BBE} or \eqref{cons} satisfies 
$$v(t,x)=D_pH(x,\nabla\Phi(t,x)),$$
where $$H(x,p)=\sup_{v\in T_x\Omega} p v- L(x,v)$$ is the Hamiltonian function associated with $L$.
\begin{proposition}[Constrained dynamical OT in Lagrangian formulation]\label{thm}
The constrained dynamical OT has the following formulation:   
\begin{equation}\label{Lag}
\begin{split}
c(\theta_0, \theta_1)=\inf \Big\{&\int_0^1\mathbb{E}_{z\sim\mu}L(g(\theta_t,z), \frac{d}{dt}g(\theta_t, z))dt\colon\\
& \frac{d}{dt}g(\theta_t, z)=D_p H(g(\theta_t,z), \nabla_x\Phi(t, g(\theta_t,z))),~\theta(0)=\theta_0,~\theta(1)=\theta_1\Big\},
\end{split}
\end{equation}
where the infimum is taken over all feasible potential functions $\Phi\colon [0, 1]\times \Omega\rightarrow \mathbb{R}$ and parameter paths $\theta\colon [0,1]\rightarrow \mathbb{R}^d$.
\end{proposition}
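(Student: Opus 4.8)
The plan is to establish that the value $c(\theta_0,\theta_1)$ of the Eulerian constrained problem \eqref{cons} equals the value of the right-hand side of \eqref{Lag}, which I will call $c_L(\theta_0,\theta_1)$, by proving the two inequalities $c\le c_L$ and $c_L\le c$. The single computational tool used throughout is the pushforward identity \eqref{a}, read as a change of variables $x=g(\theta_t,z)$ that converts any integral $\int_\Omega F(x)\rho(\theta_t,x)\,dx$ into $\int_Z F(g(\theta_t,z))\mu(z)\,dz$; the structural input is the quoted fact that along an optimal path the velocity has the Hamiltonian form $v(t,x)=D_pH(x,\nabla_x\Phi(t,x))$, which itself comes from introducing $\Phi$ as the multiplier of the constrained continuity equation \eqref{3b} and performing a pointwise Legendre duality $p=D_vL(x,v)\Leftrightarrow v=D_pH(x,p)$ in $v$.

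For $c\le c_L$ I would take any admissible pair $(\theta,\Phi)$ for \eqref{Lag} and set $v(t,x):=D_pH(x,\nabla_x\Phi(t,x))$. The feasibility constraint in \eqref{Lag} says exactly that the curves $X_t(z):=g(\theta_t,z)$ solve the particle ODE $\dot X_t(z)=v(t,X_t(z))$, with $X_0(z)=g(\theta_0,z)\sim\rho(\theta_0,\cdot)$. Differentiating \eqref{a} in $t$ and using this ODE shows that $\rho(\theta_t,\cdot)=g_{\theta_t}\sharp\mu$ together with $v$ solves \eqref{3b}, so $(\theta,v)$ is admissible in \eqref{cons}; applying \eqref{a} once more to the integrand $L(x,v(t,x))$ turns the cost of \eqref{cons} into $\int_0^1\mathbb{E}_{z\sim\mu}L(g(\theta_t,z),\frac{d}{dt}g(\theta_t,z))\,dt$, the cost of \eqref{Lag}. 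Taking the infimum over $(\theta,\Phi)$ gives $c\le c_L$.

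For the reverse inequality I would take a near-optimal admissible pair $(\theta,v)$ for \eqref{cons}. Fixing the path $\theta$ and minimizing only over $v$ subject to \eqref{3b}, the quoted fact lets me replace $v$ by a velocity of the form $v(t,x)=D_pH(x,\nabla_x\Phi(t,x))$ without increasing the cost. It then remains to check that the trajectories of \eqref{3b} for this density path may be taken to be $X_t(z)=g(\theta_t,z)$, i.e. that $\tfrac{d}{dt}g(\theta_t,z)=v(t,g(\theta_t,z))$ for $\mu$-a.e.\ $z$; granting this, $(\theta,\Phi)$ is feasible in \eqref{Lag} and, by \eqref{a} applied to $L(x,v(t,x))$, its cost equals the cost of $(\theta,v)$ in \eqref{cons}, so $c_L\le c$.

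The main obstacle is this last identification. For a fixed density path $\rho(\theta_t,\cdot)=g_{\theta_t}\sharp\mu$ the continuity equation \eqref{3b} does not pin down $v$: two admissible velocities differ by a field that is divergence-free against $\rho(\theta_t,\cdot)$, so a priori the Hamiltonian-form minimizer produced by the multiplier step need not coincide with the ``pushforward velocity'' $z\mapsto\tfrac{d}{dt}g(\theta_t,z)$ read off from the parametrization. The way I would try to close the gap is to note that, since $\rho(\theta,\cdot)$ is a positive smooth density and hence $g_{\theta_t}$ is injective, the pushforward velocity is itself admissible in \eqref{3b}; then, using convexity of $p\mapsto H(x,p)$, an admissible velocity of the form $D_pH(x,\nabla_x\Phi)$ is the unique minimizer of $\int_\Omega L(x,v(t,x))\rho(\theta_t,x)\,dx$, so the two coincide precisely along paths whose pushforward velocity is already of this gradient form. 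Verifying that the optimal path of \eqref{cons} is such a path — equivalently, that passing to velocities generated by the curves $g(\theta_t,z)$ does not raise the infimum — is the delicate point; under the paper's blanket smoothness assumptions I expect it to be handled by matching the first-order optimality systems of \eqref{cons} and \eqref{Lag} rather than by a compactness or relaxation argument, and it may implicitly require a mild structural condition on the generator $g_\theta$.
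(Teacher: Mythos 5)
Your first inequality is, almost word for word, the paper's entire proof: the paper takes a feasible pair $(\theta_t,\Phi)$, sets $v(t,x)=D_pH(x,\nabla\Phi(t,x))$, differentiates $\mathbb{E}_{z\sim\mu}f(g(\theta_t,z))$ in two ways (your ``differentiating \eqref{a} in $t$'' is exactly the computation \eqref{form1}$=$\eqref{form2}, with one integration by parts) to conclude the constrained continuity equation \eqref{1}, and then uses \eqref{a} once more to identify the costs, which is \eqref{2}. So that half of your argument is correct and is the same route as the paper. The difference is that you go on to attempt the reverse inequality $c_L\le c$, while the paper does not argue it at all: it rests entirely on the quoted ``fact'' that the optimal Eulerian velocity has the form $D_pH(x,\nabla\Phi)$, and it never addresses the trajectory-identification issue you isolate, namely whether the parametrization velocity $\frac{d}{dt}g(\theta_t,z)$ coincides with (or can be replaced by) the Hamiltonian-form minimizer for the fixed density path $\rho(\theta_t,\cdot)$. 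So the ``delicate point'' you flag is not a step you failed to recover from the paper; it is a genuine gap that the paper's proof also leaves open, and your discussion of it (the continuity equation determines $v$ only up to fields that are divergence-free against $\rho(\theta_t,\cdot)$; the gradient-form field is the unique cost minimizer for the fixed path by convexity of $H$ in $p$) is the honest way to see what is missing.

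One caution on the way you try to close that gap: the injectivity assumed in the paper is of the map $\theta\mapsto\rho(\theta,\cdot)$, not of $z\mapsto g(\theta,z)$, and positivity and smoothness of the pushforward density do not imply that $g_{\theta_t}$ is injective. Without injectivity of $g_{\theta_t}$ the pushforward velocity $\frac{d}{dt}g(\theta_t,z)$ need not even be a function of the position $x=g(\theta_t,z)$, so it does not automatically define an admissible Eulerian field in \eqref{3b}; this is precisely why the constraint in \eqref{Lag} imposes the form $D_pH(g(\theta_t,z),\nabla_x\Phi(t,g(\theta_t,z)))$, which is a function of position by construction, and why the feasible set of $\theta$-paths in \eqref{Lag} is a priori smaller than in \eqref{cons}. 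A fully rigorous equality would indeed require either this kind of structural hypothesis on $g_\theta$ or an argument (e.g.\ a projection of $\frac{d}{dt}g(\theta_t,z)$ onto fields of the form $D_pH(\cdot,\nabla\Phi)$ that does not increase the cost and preserves feasibility), neither of which appears in the paper; so your proposal should be read as reproducing the paper's argument plus a correct diagnosis of what both arguments leave unproved.
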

\begin{proof}
Denote \begin{equation*}
\frac{d}{dt}g(\theta_t,z)=v(t,g(\theta,z)), \quad \textrm{with}\quad v(t, g(\theta_t,z))=D_p H(g(\theta_t,z), \nabla\Phi(t, g(\theta_t,z))).
\end{equation*}
We show that the probability density transition equation of $g(\theta_t,z)$ satisfies the {\em constrained continuity equation}
\begin{equation}\label{1}
 \frac{\partial}{\partial t}\rho(\theta_t, x)+\nabla\cdot(\rho(\theta_t, x)v(t,x))=0,
\end{equation}
and 
\begin{equation}\label{2}
\mathbb{E}_{z\sim\mu}L(g(\theta_t,z), \frac{d}{dt}g(\theta_t,z))=\int_{\Omega}L(x, v(t,x))\rho(\theta_t,x)dx.
\end{equation}

On the one hand, consider $f\in C^{\infty}_c(\Omega)$, then
\begin{equation}\label{form1}
\begin{split}
\frac{d}{dt}\mathbb{E}_{z\sim\mu} f(g(\theta_t,z))=&\frac{d}{dt}\int_{Z} f(g(\theta_t,z))\mu(z)dz\\
=&\frac{d}{dt}\int_\Omega f(x)\rho(\theta_t,x)dx\\
=&\int_\Omega f(x) \frac{\partial}{\partial t}\rho(\theta_t,x)dx,
\end{split}
\end{equation}
where the second equality holds from the push forward relation \eqref{a}. 

On the other hand, consider
\begin{equation}\label{form2}
\begin{split}
\frac{d}{dt}\mathbb{E}_{z\sim\mu} f(g(\theta_t,z))=&\lim_{\Delta t\rightarrow 0}\mathbb{E}_{z\sim\mu}\frac{f(g(\theta_{t+\Delta t}, z)-f(g(\theta_t,z))}{\Delta t}\\
=&\lim_{\Delta t\rightarrow 0}\int_Z\frac{f(g(\theta_{t+\Delta t}, z))-f(g(\theta_t, z))}{\Delta t}\mu(z)dz\\
=&\int_Z \nabla f(g(\theta_t,z)) \frac{d}{dt}g(\theta_t,z)\mu(z)dz\\
=&\int_Z \nabla f(g(\theta_t, z)) v(t, g(\theta_t, z))\mu(z)dz\\
=&\int_\Omega \nabla f(x) v(t,x)\rho(\theta_t, x)dx\\
=&-\int_\Omega f(x) \nabla\cdot(v(t,x)\rho(\theta_t,x))dx,
\end{split}
\end{equation}
where $\nabla$, $\nabla\cdot$ are gradient and divergence operators w.r.t. $x\in\Omega$. The second to last equality holds from the push forward relation \eqref{a}, and the last equality holds using the integration by parts w.r.t. $x$. Since $\eqref{form1}= \eqref{form2}$ for any $f\in C_c^{\infty}(\Omega)$, we have proven \eqref{1}. 

In addition, by the definition of the push forward operator \eqref{a}, we have 
\begin{equation*}
\begin{split}
\mathbb{E}_{z\sim\mu} L(g(\theta_t, z), \frac{d}{dt}g(\theta_t,z))=&\int_{Z} L(g(\theta_t,z), v(t, g(\theta_t,z)))\mu(z) dz\\
=&\int_\Omega L(x, v(t,x))\rho(\theta_t,x)dx.
\end{split}
\end{equation*}
Thus we prove \eqref{2}.
\end{proof}
It is interesting to compare variational problems \eqref{BBL} with \eqref{Lag}. We can view $g(\theta_t, z)\in \Omega$ as ``parameterized'' particles, whose density function is constrained in the parameterized probability set $\rho(\Theta)$. Their motions result at the evolution of probability transition densities in $\rho(\Theta)$, satisfying the constrained continuity equation \eqref{3b}. For this reason, we call \eqref{Lag} the Lagrangian formalism of constrained dynamical OT. 

It is also worth noting that each movement of $g(\theta_t, z)$ results a motion in density path $\rho(\theta_t,x)$. The change of density path will identify a potential function $\Phi(t,x)$ depending on $\theta_t$. 

In additon, the cost functional in dynamical OT can involve general potential energies, such as linear potential energy: 
\begin{equation*}
\mathcal{V}(\rho)=\int_{\Omega}V(x)\rho(x)dx,
\end{equation*}
and interaction energy: 
\begin{equation*}
\mathcal{W}(\rho)=\int_{\Omega}\int_{\Omega}w(x,y)\rho(x)\rho(y)dxdy.
\end{equation*}
Here $V(x)$ is a linear potential, and $w(x,y)=w(y,x)$ is a symmetric interaction potential. If $\rho(\theta, x)\in \rho(\Theta)$, then 
\begin{equation*}
\begin{split}
\mathcal{V}(\rho(\theta, \cdot))=&\int_{\Omega}V(x)\rho(\theta, x)dx\\
=&\int_{Z} V(g(\theta,z))\mu(z)dz\\
=&\mathbb{E}_{z\sim\mu}V(g(\theta, z)),
\end{split}
\end{equation*}
and 
\begin{equation*}
\begin{split}
\mathcal{W}(\rho(\theta, \cdot))=&\int_{\Omega}\int_{\Omega} w(x,y) \rho(\theta, x)\rho(\theta, y)dxdy\\
=&\int_{Z}\int_{Z}w(g(\theta, z_1), g(\theta, z_2))\mu(z_1)\mu(z_2)dz_1dz_2\\
=&\mathbb{E}_{(z_1,z_2)\sim \mu\times \mu}w(g(\theta, z_1), g(\theta, z_2)),
\end{split}
\end{equation*}
where each second equality in the above two formulas hold because of the constrained mapping relation \eqref{a} and $\mu\circ\mu$ represents an independent joint density function supported on $Z\times Z$ with marginals $\mu(z_1)$, $\mu(z_2)$. Similarly in proposition \ref{thm}, we have 
\begin{equation*}\label{MFG}
\begin{split}
c(\theta_0,\theta_1)=&\inf_{v_t,~\theta(0)=\theta_0,~\theta(1)=\theta_1}~\Big\{\int_0^1\Big[\int_\Omega L(x, v(t,x))\rho(\theta_t,x)dx-\mathcal{V}(\rho(\theta_t, \cdot))-\mathcal{W}(\rho(\theta_t,\cdot))\Big]dt\colon\\
&\hspace{4cm} \frac{\partial}{\partial t}\rho(\theta_t,x)+\nabla\cdot (\rho(\theta_t,x)v(t,x))=0\Big\} \\
=&\inf_{\Phi_t,~\theta_t~\theta(0)=\theta_0,~\theta(1)=\theta_1}~\Big\{\int_0^1\Big[\int_\Omega L(x, D_p H(x,\nabla\Phi(t,x)))\rho(\theta_t,x)dx-\mathcal{V}(\rho(\theta_t, \cdot))-\mathcal{W}(\rho(\theta_t,\cdot))\Big]dt\colon\\
&\hspace{4cm} \frac{\partial}{\partial t}\rho(\theta_t,x)+\nabla\cdot (\rho(\theta_t,x)v(t,x))=0\Big\} \\
=&\inf_{\Phi_t,~\theta_t,~\theta(0)=\theta_0,~\theta(1)=\theta_1}\Big\{\int_0^1~[\mathbb{E}_{z\sim\mu}L(g(\theta_t,z), \frac{d}{dt}g(\theta_t, z))-\mathbb{E}_{z\sim\mu}V(g(\theta_t, z))\\
&\hspace{3.5cm}-\mathbb{E}_{(z_1,z_2)\sim \mu\times \mu}w(g(\theta_t, z_1), g(\theta_t, z_2))]dt\colon\\
&\hspace{4cm}\frac{d}{dt}g(\theta_t, z)=D_p H(g(\theta_t,z), \nabla_x\Phi(t, g(\theta_t,z)))\Big\}.
\end{split}
\end{equation*}

We next demonstrate an example of constrained dynamical OT problems.
\begin{example}[Constrained $L^2$-Wasserstein distance]\label{example2}
Let $L(x,v)=\|v\|^2$ and denote $d_{W_2}(\theta_0,\theta_1)=c(\theta_0,\theta_1)^{\frac{1}{2}}$, then
\begin{equation*}
\begin{split}
d_{W_2}(\theta_0, \theta_1)^2=\inf \Big\{&\int_0^1\mathbb{E}_{z\sim\mu}\|\frac{d}{dt}g(\theta_t, z))\|^2dt\colon\frac{d}{dt}g(\theta_t, z)=\nabla\Phi(t, g(\theta_t,z)),~\theta(0)=\theta_0,~\theta(1)=\theta_1\Big\}.
\end{split}
\end{equation*}
Observe that \eqref{Lag} forms a geometric action energy function in parameter space $\Theta$, in which the metric tensor can be extracted explicitly. In other words, denote $G(\theta)\in\mathbb{R}^{d\times d}$ by
\begin{equation*}
\dot\theta^{\ts}G(\theta)\dot\theta=\dot\theta^{\ts}\mathbb{E}_\mu(\nabla_\theta g(\theta,z)\nabla_\theta g(\theta,z)^{\ts})\dot\theta,
\end{equation*}
with the constraint \begin{equation*}
(\dot\theta, \nabla_\theta g(\theta ,z))= \nabla_x\Phi(g(\theta,z)).
\end{equation*}
Here $\nabla_\theta g(\theta,z)\in \mathbb{R}^{d\times n}$, $\Phi$ is a potential function satisfying $$-\nabla\cdot(\rho(\theta,x)\nabla\Phi(x))=(\nabla_\theta\rho(\theta,x), \dot\theta),$$ and $G(\theta)=\mathbb{E}_{z\sim\mu}(\nabla_\theta g(\theta,z) \nabla_\theta g(\theta, z)^{\ts})\in \mathbb{R}^{d\times d}$ is a semi-positive definite matrix. 
\end{example}
\bibliographystyle{abbrv}
\bibliography{LF}
\end{document}